\newcommand{\R}{\mathbb R}
\newcommand{\C}{\mathbb C}
\newcommand{\N}{\mathbb N}
\newcommand{\Z}{\mathbb Z}
\newcommand{\eps}{\varepsilon}
\renewcommand{\epsilon}{\varepsilon}
\newcommand{\ran}{\operatorname{ran}}
\def\env@cases#1{%
  \let\@ifnextchar\new@ifnextchar
  \left\lbrace\def\arraystretch{1.2}%
  \array{@{}#1@{\quad}l@{}}}
\newcommand{\arg@parser}[1]{%
  \advance\arg@count\@ne
  \expandafter\let\csname arg\romannumeral\arg@count\endcsname\comma@entry
}
\newcommand\res[1]{
  \arg@count=\z@
  \comma@parse{ \lambda,A }\arg@parser 
  \arg@count=\z@
  \comma@parse{#1}\arg@parser
  \ifnum\arg@count>2 %
    \@latex@error{Too many arguments}{%
      The macro \string\mycmd\space got \the\arg@count\space
       arguments,\MessageBreak
      but expected are 2 arguments.\MessageBreak
      \@ehd
    }%
  \fi
  \edef\process@me{%
    \noexpand\@res
    {\etex@unexpanded\expandafter{\argi}}%
    {\etex@unexpanded\expandafter{\argii}}%
  }%
  \process@me
}
\newcommand{\@res}[2]{%
  \ensuremath\left( #1 - #2 \right)^{-1}
}
\numberwithin{equation}{section}
\theoremstyle{definition}
\newtheorem{de}{Definition}[section]
\theoremstyle{plain}
\newtheorem{definition}[de]{Definition}
\newtheorem{assumption}[de]{Assumption}
\newtheorem{proposition}[de]{Proposition}
\newtheorem{lemma}[de]{Lemma}
\newtheorem{theorem}[de]{Theorem}
\newtheorem{corollary}[de]{Corollary}
\theoremstyle{remark}
\newtheorem{remark}[de]{Remark}
\newcommand{\Acal}{\mathcal{A}}
\newcommand{\Bcal}{\mathcal{B}}
\newcommand{\Ecal}{\mathcal{E}}
\newcommand{\Hcal}{\mathcal{H}}
\newcommand{\Ical}{\mathcal{I}}
\newcommand{\Lcal}{\mathcal{L}}
\newcommand{\Rcal}{\mathcal{R}}
\newcommand{\Xcal}{\mathcal{X}}
\newcommand{\Sbb}{\mathbb{S}}
\DeclareMathOperator{\essran}{ess\,ran}
\newcommand{\di}{\mathrm{d}}
\newcommand{\ONE}{\mathbbm{1}}
\def\XXint#1#2#3{{\setbox0=\hbox{$#1{#2#3}{\int}$}
\vcenter{\hbox{$#2#3$}}\kern-.5\wd0}}
 \pgfplotsset{every tick label/.append style={font=\footnotesize},
}
\numberwithin{equation}{section}
\renewcommand{\eps}{\varepsilon}
\renewcommand{\phi}{\varphi}
\newcommand{\p}{\partial}
\title{A uniform ergodic theorem for degenerate flows on the annulus}
\author{Jonathan Ben-Artzi}
\author{Baptiste Morisse}
\address{School of Mathematics, Cardiff University, Cardiff CF24 4AG, Wales, UK}
\email{Ben-ArtziJ@cardiff.ac.uk}
\email{MorisseB@cardiff.ac.uk}
\date{\today}
\keywords{Uniform ergodic theorem, nonlinear pendulum, degenerate flows, density of states.}
\subjclass[2010]{37A30 (primary); 	35P20, 35B40}
\thanks{The authors acknowledge support from the Early Career Fellowship EP/N020154/1 of the UK's Engineering and Physical Sciences Research Council (EPSRC)}
\begin{document}
\maketitle

\begin{abstract}
Motivated by the well-known phase-space portrait of the nonlinear pendulum, the purpose of this paper is to obtain convergence rates in the ergodic theorem for flows in the plane that have arbitrarily slow trajectories. Considering bounded periodic trajectories near the heteroclinic orbits, it is shown that despite lacking a spectral gap, there exists a functional space (which is a strict subset of $L^2$) on which time averages converge uniformly to spatial averages (with an explicit rate). The main ingredient of the proof is an estimate of the density of the spectrum of the generator of the flow near zero.
\end{abstract}

\hypersetup{
  pdfauthor = {},
  pdftitle = {},
  pdfsubject = {},
  pdfkeywords = {}
}

\setcounter{tocdepth}{1}

\section{Introduction}\label{sec:intro}
The nonlinear pendulum is a fundamental   dynamical system, its phase portrait described qualitatively in Figure \ref{fig:pendulum}. All solutions are periodic with the exception of the heteroclinic orbits connecting the unstable equilibria. These orbits separate \emph{bounded} solutions (for which the pendulum does not complete a full revolution) from the \emph{unbounded} solutions (where the pendulum always revolves in one direction, never stopping). This provides a decomposition into distinct invariant sets. See  \cite{Moser2005} for further details.
	\begin{figure}[H]
	\includegraphics[width=10cm,height=5cm]{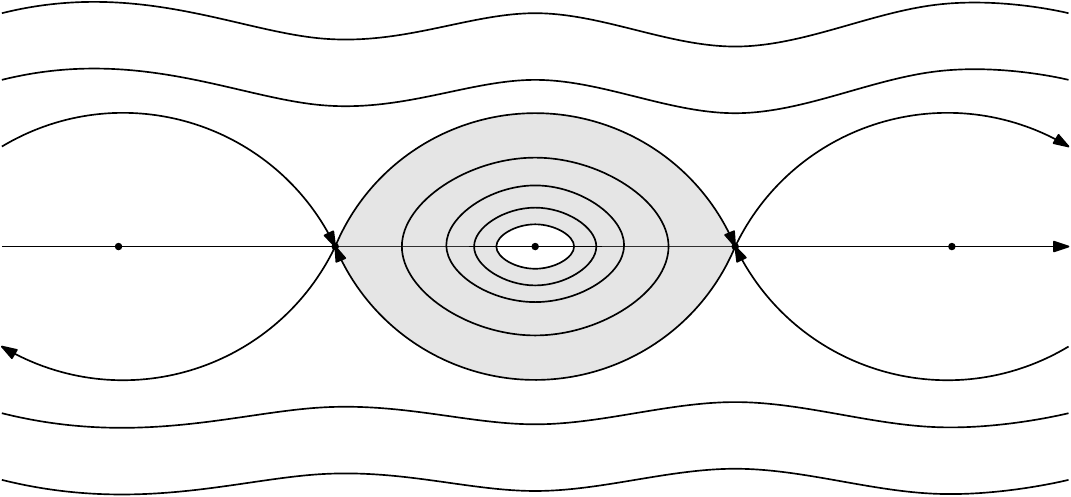}
	\caption{Qualitative description of the phase-space of the nonlinear pendulum. The shaded region has an outer boundary comprised of two heteroclinic orbits, and a small bounded periodic solution as an inner boundary.}\label{fig:pendulum}
	\end{figure}
Let $\Phi:\R\times\R^2\to\R^2$ be the associated evolution function, satisfying the standard group structure: $\Phi(0,x)=x$, and $\Phi(t_2,\Phi(t_1,x))=\Phi(t_2+t_1,x)$. The mean ergodic theorem \cite{VonNeumann1932a} states that for any $f\in  L^2(\R^2)$, the time average $\frac{1}{2T}\int_{-T}^Tf(\Phi(t,\cdot))\,\di t$ converges, as $T\to+\infty$, to the projection onto the invariant subspace, i.e. functions constant along each trajectory.  Since this system has no spectral gap, this convergence lacks a rate in $L^2$. The purpose of this paper is to seek subspaces $\Xcal\subset L^2(\R^2)$ on which there \emph{is} a rate (and find the rate).
	
The absence of a spectral gap makes this problem challenging. Let us mention three distinct difficulties. The first comes from the small amplitude solutions around the steady equilibria (`centers') where $\Phi$ has a fixed point. The second comes from the unbounded solutions which are infinitely long. Lastly, the third difficulty is due to the bounded solutions near the heteroclinic orbits which become arbitrarily slow. These three types of trajectories have one thing in common: they all contribute to the spectrum of the infinitesimal generator of the flow near $0$, thereby denying it of a spectral gap. \\

In this paper we concentrate on the third aforementioned difficulty:  bounded solutions that become arbitrarily slow. These solutions are highlighted in the shaded region in Figure \ref{fig:pendulum}: the flow becomes arbitrarily slow near the heteroclinic orbits which lie on the outer boundary (though the speed along each trajectory isn't constant).  We simplify the geometry by considering a circular annulus $\Acal=[0,1]_m\times\Sbb_\theta$ where near one of the boundaries the flow becomes arbitrarily slow  (and, for simplicity, constant along each trajectory). Here $m\in[0,1]$ parametrizes the radial variable, where $m=0$ corresponds to the heteroclinic (infinitely slow) orbit, and $\theta\in\Sbb=[0,1]_{\mathrm{per}}$ is the angular variable.
For convenience, we  reverse the direction of the flow, so that we consider the operator $\frac{\di}{\di\theta}$ rather than $-\frac{\di}{\di\theta}$.   The speed of the flow is given by  $\varphi(m)\geq0$ which is assumed to satisfy $\phi(m)\to0$ as $m\to 0$ (see Figure \ref{fig:annulus}).

	\begin{figure}[H]
	\includegraphics[scale=0.3]{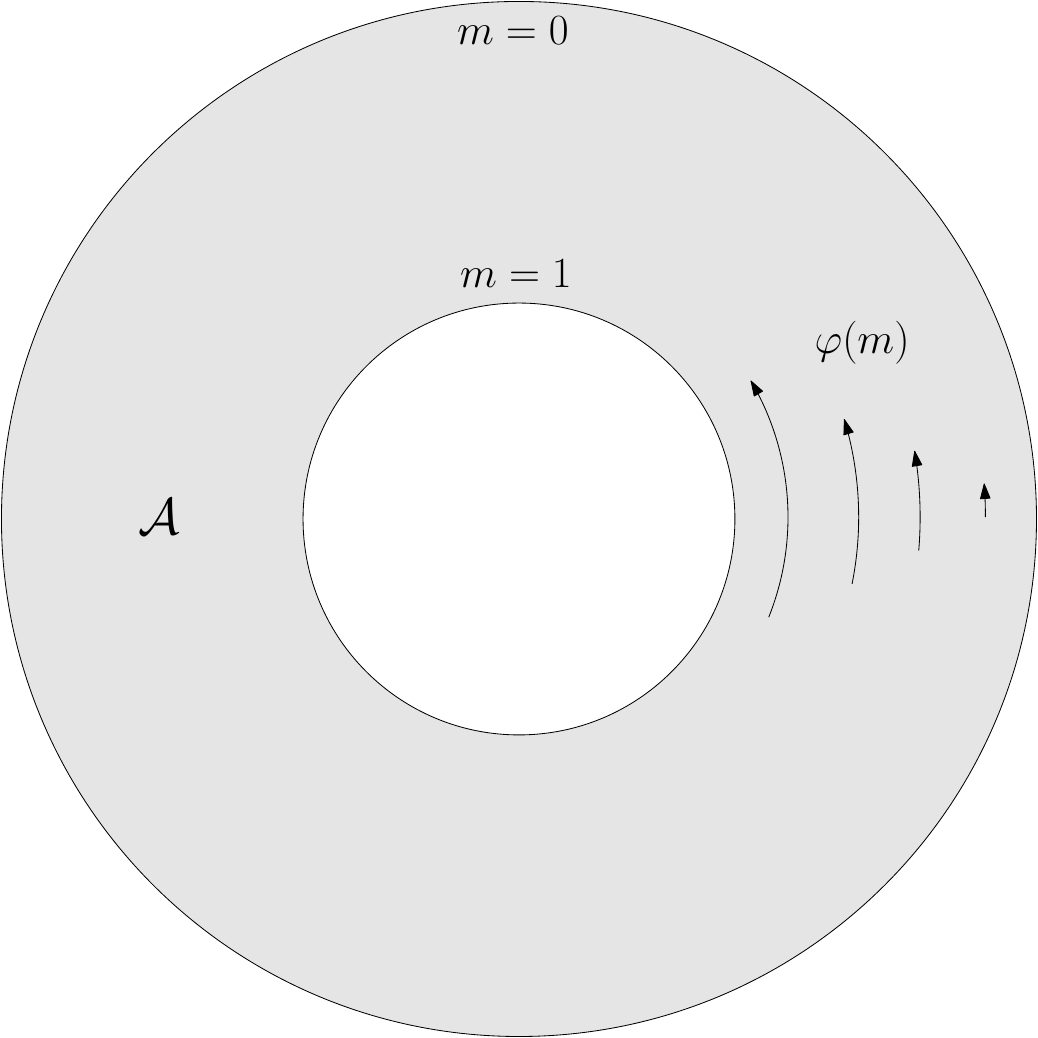}
	\caption{The annulus $\Acal=[0,1]_m\times\Sbb_\theta$. The steady circular flow has speed $\varphi(m)\geq0$, depending on the radial variable $m$, which tends to $0$ as $m\to0$.}\label{fig:annulus}
	\end{figure}

The  infinitesimal generator of the flow is given by the  self-adjoint operator
	\begin{equation}\label{eq:operator-a}
	A
	=
	-i\phi(m)\frac{\p}{\p\theta}
	\qquad
	\text{acting in }L^2(\Acal).
	\end{equation}
The corresponding time averages are defined as
	\begin{equation*}\label{eq:pt}
	P^Tf:=\frac{1}{2T}\int_{-T}^Te^{itA}f\,\di t=\frac{1}{2T}\int_{-T}^Tf(m,\theta+t\phi(m))\,\di t
	\end{equation*}
and the spatial average is defined as (noting that, without loss of generality, the lengths of all trajectories are normalized to $1$)
	\begin{equation*}\label{eq:p}
	Pf:=\int_{\Sbb}f(m,\theta)\,\di\theta.
	\end{equation*}
In Theorem \ref{thm:main} below we identify a functional subspace $\Xcal\subset L^2(\Acal)$ on which  $\|P^T-P\|_{\Xcal\to L^2(\Acal)}\to0$ as $T\to+\infty$ with an explicit rate. To state this theorem precisely, we must first introduce some functional spaces in order to define the subspace $\Xcal$. We write $L^2(\Acal)=L^2\left([0,1] ; L^2(\Sbb)\right)$ and hence represent elements as
	\begin{equation*}
	f=\int_{[0,1]}^\oplus f_m\,\di m\qquad\text{with}\qquad f_m \in L^2(\Sbb).
	\end{equation*}
We define the Fourier modes of each fiber $f_m$  as
	\begin{equation*}
	\widehat{f_m}(k) := \int_{0}^1 f_m(\theta) e^{- 2\pi ik \theta } \,\di\theta, \qquad\ k\in\Z.
	\end{equation*}
Let $L^2_0(\Sbb)\subset L^2(\Sbb)$ be the subspace of functions with zero average: $L^2_0(\Sbb) = \left\{ g \in L^2(\Sbb) \,:\, \widehat{g}(0) = 0 \right\}.$ The standard homogeneous Sobolev spaces on $L^2_0(\Sbb)$ are defined as:
	\[
	\dot{H}^{\gamma} := \left\{ g\in L^2_0(\mathbb{S}) \, : \, \|g\|_{ \dot{H}^{\gamma} }^2:=\sum_{k\in\Z}|k|^{2\gamma} |\widehat{g}(k)|^2<+\infty \right\},\qquad \gamma \geq 0.
	\]
Turning to regularity in the $m$ variable, we define for $s\in(0,1)$ and $\gamma \geq 0$ the space
	\[
	\dot{\mathcal{H}}^{s,\gamma}
	:=
	W^{s,2}\left([0,1] ; \dot{H}^{\gamma} \right)
	=
	\left\{ f \in L^2\left( [0,1] ; \dot{H}^{\gamma} \right) \, : \,
	 \int_0^1 \int_0^1 \frac{\|f_m - f_{m'}\|^2_{\dot{H}^{\gamma}} }{|m-m'|^{1+2s}} \,\di m\, \di m'<+\infty \right\}.
	\]
We endow this space with the  norm
	\begin{equation*}
	\label{def.norm}
	\|f\|_{s,\gamma}^2
	:=
	\|f\|_{L^2\left([0,1]; \dot{H}^{\gamma}\right)}^2 + \int_0^1 \int_0^1 \frac{\|f_m - f_{m'}\|^2_{\dot{H}^{\gamma}} }{|m-m'|^{1+2s}} \,\di m\, \di m'.
	\end{equation*}

For further discussion of these spaces we refer to  \cite[Theorem 10.2]{Lions1972} and \cite[Corollary 26]{Simon1990}.
We can finally define our main functional space:
\begin{definition}[The functional space $\dot{\mathcal{H}}^{s,\gamma}_0$]
	\label{definition.H}
	Let $s>1/2$ and $\gamma\geq0$.
	We define the subspace $\dot{\mathcal{H}}^{s,\gamma}_0\subset \dot{\mathcal{H}}^{s,\gamma}$ to be the set of functions that vanish along $\{m=0\}$:
	\begin{equation*}\label{eq:main-space}
		\dot{\mathcal{H}}^{s,\gamma}_0:
		=
		\left\{ f \in \dot{\mathcal{H}}^{s,\gamma}  \, : \, f_0 = 0 \right\} .
	\end{equation*}
We equip this subspace with the norm $\left\| \cdot\right\|_{s,\gamma}$.
\end{definition}

\bigskip

We are now ready to state our main result, starting with the assumption that the flow degenerates near $m=0$.

\begin{assumption}\label{ass:main}
There exist $m_0,c,\alpha>0$ such that $\phi(m)=cm^\alpha$ on $[0,m_0)$ and  $\phi$ is continuous and bounded uniformly away from $0$ on $[m_0,1]$. \end{assumption}

\begin{theorem}\label{thm:main}
Let Assumption \ref{ass:main} hold. 
Then there exist  $s > 1/2$ and $\gamma \geq 0$  satisfying
	\begin{equation}
	\label{constraint}
	\gamma + \frac{s}{\alpha} > \frac12
	\end{equation}
and a constant $C>0$ such that the following uniform rate holds:
	\begin{equation*}\label{eq:conv-rate-frac}
	\|P^T\|_{\dot{\mathcal{H}}^{s,\gamma}_0\to L^2(\Acal)}\leq CT^{-\ell/2},\qquad\forall T>1,
	\end{equation*}
where $\ell=\min\{\frac{2s}{\alpha}-\varepsilon,2\}$ for any $\varepsilon>0$ and where $C$  does not depend on $T$.
\end{theorem}

\begin{remark}
Observe that the subspace $\dot{\mathcal{H}}^{s,\gamma}_0$ includes only functions with  average zero along any fiber, so that $P$ is trivial on this subspace:  $Pf=0$ for any $f\in \dot{\mathcal{H}}^{s,\gamma}_0$. Therefore  $\|P^T\|_{\dot{\mathcal{H}}^{s,\gamma}_0\to L^2(\Acal)}=\|P^T-P\|_{\dot{\mathcal{H}}^{s,\gamma}_0\to L^2(\Acal)}$.
\end{remark}

 \begin{remark}
Assumption \ref{ass:main} can be weakened, as only the behavior of $\phi$ near its zeros is important. For instance, all our arguments below could be modified to handle a case where $\phi$ has  several zeros where it vanishes like $|m-m_i|^{\alpha_i}$ for some $\alpha_i>0$ near each zero $m_i$. Furthermore, the requirement that $\phi$ be continuous can also be weakened. However these are technicalities that we do not pursue here.\\
 \end{remark}

For any self-adjoint operator $A:D(A)\subset\Hcal\to\Hcal$ acting in a separable Hilbert space $\Hcal$, von Neumann's ergodic theorem \cite{VonNeumann1932a} guarantees the strong convergence (i.e without a rate)  $P^T\to P$ in $\Hcal$. Von Neumann's idea was to use the spectral theorem to write $A=\int_\R\lambda\,\di E(\lambda)$, where $\{E(\lambda)\}_{\lambda\in\R}$ is the resolution of the identity of  $A$. This leads to
	\begin{equation}
	\begin{split}
	(P^T-P)f
	&=
	\frac{1}{2T}\int_{-T}^Te^{itA}f\,\di t-Pf
	=
	\frac{1}{2T}\int_{-T}^T\int_\R e^{it\lambda}\,\di E(\lambda)f\,\di t-Pf\\
	&=
	\frac{1}{2T}\int_{-T}^T\int_{\R\setminus\{0\}} e^{it\lambda}\,\di E(\lambda)f\,\di t
	=
	\int_{\R\setminus\{0\}} \frac{\sin T\lambda}{T \lambda}\,\di E(\lambda)f.\label{eq:von-neumann}
	\end{split}
	\end{equation}
The last expression tends to $0$ as $T\to+\infty$. It is a simple consequence that   a spectral gap leads to a rate of convergence of $T^{-1}$. However, as we prove in Corollary \ref{cor:spec-gap} below, the operator $A$ defined in \eqref{eq:operator-a} has no spectral gap due to Assumption \ref{ass:main}.

We overcome this difficulty by using  the results of \cite{Ben-Artzi2019c}, where it is shown that even when there's no spectral gap, one can still extract a rate (albeit perhaps slower and only on a subspace) if the density of the spectrum near $0$ is bounded. More precisely, if there exists a subspace $\Xcal\subset L^2(\Acal)$ and some $r>0$ such that the density of states (DoS) of $A$ has a bound of the form
	\begin{equation}\label{eq:dos-bound}
	\left|\frac{\di}{\di\lambda}\left(E(\lambda)f,g\right)_{L^2(\Acal)}\right|
	\leq
	\psi(\lambda)\|f\|_\Xcal\|g\|_\Xcal,\qquad\forall f,g\in\Xcal,\,\forall\lambda\in (-r,r)\setminus\{0\},
	\end{equation}
where $\psi\in L^1(-r,r)$ is strictly positive a.e. on $(-r,r)$. This then allows one to replace the integration $\di E(\lambda)$ in \eqref{eq:von-neumann} with $\frac{\di E(\lambda)}{\di\lambda}\di\lambda$ and to apply the  estimate \eqref{eq:dos-bound} near $\lambda=0$. Therefore, to prove Theorem \ref{thm:main} the main task is to obtain an estimate of the form \eqref{eq:dos-bound}, which involves identifying an appropriate subspace $\Xcal$. This is achieved thanks to the observation that the operator $A$ is unitarily equivalent to the multiplication operator $\phi k$ via a Fourier transform in $\theta$ (where $k\in\Z$ is the Fourier conjugate of $\theta$). It is worthwhile mentioning that the result \cite{Ben-Artzi2019c} has been further refined recently in \cite{Kachurovskii2023a,Kachurovskii2023}.

To obtain an estimate of the DoS as in \eqref{eq:dos-bound}, the first step is to understand the structure of the spectrum. Here we take the point of view that $A$ is fibered in $m$,  composed of the one-dimensional operators
	\begin{equation*}\label{eq:fibers}
	A(m) = -i\phi(m) \frac{\di }{\di \theta}\qquad\forall m\in[0,1],
	\end{equation*}
acting on the circle $\Sbb$. This point of view is common in the context of the  Euler equations, see  \cite{Cox2013a} for instance. The main difficulty is now evident: the spectrum of each fiber $A(m)$ is discrete, while the spectrum of $A$ may have discrete, absolutely continuous and singular continuous parts.\\

This approach has proven useful in a similar context before, where in \cite{Ben-Artzi2013d} a rate of convergence was obtained for shear flows. The result \cite{Ben-Artzi2013d} treats unbounded flows which are of a similar nature to the unbounded solutions appearing in Figure \ref{fig:pendulum}. As already mentioned, the pendulum is a fundamental dynamical system, and gaining a better understanding has many applications, cf.  \cite{Ben-Artzi2014} for a brief discussion on the relationship to kinetic theory. With our present result, a full understanding of convergence rates for the ergodic theorem associated to the pendulum is only lacking an understanding of the contribution of the center.
Finally, we note that a closely related (though not the same) problem  is that of \emph{mixing}, which has seen a flurry of activity in recent years. We point out \cite{Yao2014} as one example.	\\

In Section \ref{sec:fibered} we discuss properties of fibered self-adjoint operators.  Then, in Section \ref{sec:annulus} we turn our attention to the flow in an annulus, first proving a bound on the DoS (Proposition \ref{thm:main2}) and subsequently obtaining a rate for the associated ergodic theorem, proving Theorem \ref{thm:main}.

\section{Spectral analysis of  fibered operators}\label{sec:fibered}
In this section we recall some properties of self-adjoint operators and of self-adjoint fibered operators, and prove some results which are not always readily available in the standard literature. We also discuss a simple illustrative example where we estimate the DoS. This shall prove useful later, when we  prove Theorem \ref{thm:main}. Our discussion in this section remains as general as possible, working with abstract self-adjoint operators in abstract Hilbert spaces.\\

\subsection{Spectral Theory and Fibered Operators.}
Since the spectral theorem plays an essential role in our proof, it is worthwhile recalling the definition of the resolution of the identity of a self-adjoint operator.
Let $\Hcal$ be a separable Hilbert space, and let $H:D(H)\subset\Hcal\to\Hcal$ be a self-adjoint operator. Its associated spectral family $\{E(\lambda)\}_{\lambda\in\R}$  is a family of projection operators in $\Hcal$ with the property that, for each $\lambda\in\R$, the subspace $\Hcal^\lambda=E(\lambda)\Hcal$ is the largest closed subspace such that
\begin{enumerate}
\item $\Hcal^\lambda$ \emph{reduces} $H$, namely, $HE(\lambda)g=E(\lambda)Hg$ for every $g\in D(H)$. In particular, if $g\in D(H)$ then also $E(\lambda)g\in D(H)$.
\item $(Hu,u)_{\Hcal}\leq \lambda (u,u)_{\Hcal}$ for every $u\in\Hcal^\lambda\cap D(H)$.
\end{enumerate}

Given any $f,g\in \Hcal$ the spectral family defines a complex function of bounded variation on the real line, given by
	\begin{equation*}\label{eq:spec-meas}
	\R\ni\lambda\mapsto(E(\lambda)f,g)_{\Hcal}\in\C.
	\end{equation*}
It is well-known that such a function gives rise to a complex measure (depending on $f,g$) called the \emph{spectral measure}. Recall the following useful fact:
\begin{proposition}[{\cite[X-\S1.2, Theorem 1.5]{Kato1995}}]\label{prop:abs-cont-subs}
Let $U\subset\R$ be open. The set of $f,g\in\Hcal$ for which the spectral measure is absolutely continuous in $U$ with respect to the Lebesgue measure forms a closed subspace $\mathcal{AC}_U\subset\Hcal$. This subspace is referred to as the \emph{absolutely continuous subspace of $H$ on $U$}.
\end{proposition}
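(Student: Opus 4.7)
The statement is standard in spectral theory; the plan is to follow the usual approach of first characterizing $\mathcal{AC}_U$ through a single vector rather than a pair. Namely, I would define
\[
\mathcal{AC}_U := \setb{f\in\Hcal}{\mu_f\text{ is absolutely continuous on }U},
\]
where $\mu_f$ is the positive Borel measure on $\R$ given by $\mu_f(B)=(E(B)f,f)_\Hcal=\|E(B)f\|_\Hcal^2$ (here $E(B)$ denotes the spectral projection associated to the Borel set $B$). The claim in the proposition, namely that pairs $f,g\in\mathcal{AC}_U$ yield an absolutely continuous complex spectral measure $(E(\cdot)f,g)_\Hcal$ on $U$, will follow at the end from the polarization identity
\[
(E(B)f,g)_\Hcal
=
\tfrac14\sum_{k=0}^{3}\ii^k\,\|E(B)(f+\ii^k g)\|_\Hcal^2,
\]
together with the fact that $\mathcal{AC}_U$ will already have been shown to be a linear subspace.

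The main work is then to verify that $\mathcal{AC}_U$ is a closed linear subspace of $\Hcal$. For linearity, if $f,g\in\mathcal{AC}_U$ and $c\in\C$, then for every Borel set $B\subset U$ of Lebesgue measure zero,
\[
\mu_{f+g}(B)=\normb{E(B)(f+g)}_\Hcal^2\leq 2\mu_f(B)+2\mu_g(B)=0,
\quad
\mu_{cf}(B)=|c|^2\mu_f(B)=0,
\]
so $f+g,cf\in\mathcal{AC}_U$. For closedness, suppose $f_n\in\mathcal{AC}_U$ and $f_n\to f$ in $\Hcal$. Since $E(B)$ is a bounded operator (indeed an orthogonal projection), $E(B)f_n\to E(B)f$ in $\Hcal$, and therefore
\[
\mu_f(B)=\|E(B)f\|_\Hcal^2=\lim_{n\to\infty}\|E(B)f_n\|_\Hcal^2=\lim_{n\to\infty}\mu_{f_n}(B).
\]
For any Lebesgue-null Borel set $B\subset U$ one has $\mu_{f_n}(B)=0$ for all $n$, so $\mu_f(B)=0$, which shows that $\mu_f$ is absolutely continuous on $U$ and thus $f\in\mathcal{AC}_U$.

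There is no real obstacle here: the only subtlety is that the proposition is phrased in terms of pairs $(f,g)$ whereas absolute continuity of a signed/complex measure is more naturally reduced to absolute continuity of positive measures. The polarization identity above reduces the pair-formulation to the single-vector formulation, after which linearity is a two-line estimate and closedness uses only the continuity of the projections $E(B)$. One could alternatively invoke the Radon--Nikodym theorem to say that $f\in\mathcal{AC}_U$ is equivalent to $\mu_f\restriction_U$ having an $L^1_\loc(U)$ density, but this is not needed for the present statement.
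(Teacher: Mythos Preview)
Your argument is correct and is essentially the standard proof (as in Kato, X--\S1.2): reduce to the positive measures $\mu_f=\|E(\cdot)f\|_\Hcal^2$ via polarization, then check that vanishing of $\mu_f$ on Lebesgue-null Borel sets $B\subset U$ is preserved under linear combinations and under $\Hcal$-limits, the latter because each $E(B)$ is a contraction. The paper does not supply its own proof of this proposition at all---it simply records the result and attributes it to Kato---so there is nothing further to compare; your write-up fills in exactly the argument one finds in the cited reference.
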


We can go even further, and try to differentiate the spectral measure.
Let $\mathcal{AC}_U\subset\Hcal$ be the absolutely continuous subspace of $H$ on $U$ and let $\lambda\in U$.  Suppose that there exists a subspace $\mathcal X\subset\mathcal{AC}_U$ equipped with a stronger norm such that the bilinear form $\frac{\di}{\di\lambda}(E(\lambda)\cdot,\cdot)_{\Hcal}:\mathcal X \times \mathcal X\to\C$ is bounded.  Then this bilinear form   is called the \emph{density of states (DoS)} of $H$ at $\lambda$. It obviously depends on the choice of subspace $\Xcal$. In the context of Schr\"odinger operators, common examples of such subspaces include weighted-$L^2$ spaces and $L^p$ spaces, cf. \cite{Ben-Artzi2019c}. In physics, the DoS represents the number possible states a system can attain at the energy level $\lambda$. It is worthwhile noting that an alternative approach for estimating the DoS is via the so-called \emph{limiting absorption principle}, see the classical result \cite{Agmon1975} for instance.\\

We are now ready to discuss fibered operators. We follow the notation of \cite[p. 283]{Reed1978}. Let $\Hcal'$ be a Hilbert space and let $(M,\di\mu)$ be a measure space. Let $A(\cdot):M\to\Lcal^{s.a.}(\Hcal')$ be a measurable function taking values in the space of self-adjoint operators (not necessarily bounded) on $\Hcal'$ (with appropriate domains). Let	
	\begin{equation}\label{eq:fibered-h-a}
	\Hcal=\int_M^\oplus\Hcal'\qquad\text{and}\qquad A=\int_M^\oplus A(m)\di\mu(m).
	\end{equation}
It is well-known that since all $A(m)$ are self-adjoint, so is $A$. Given  an element $f\in\Hcal$, we denote its fibers as $f_m\in\Hcal'$ so that 
	\begin{equation}\label{eq:fibered-f}
	f=\int_M^\oplus f_m\,\di\mu(m).
	\end{equation}
We   denote the resolution of the identity of $A$ by $\{E(\lambda)\}_{\lambda\in\R}$ and of $A(m)$ by $\{E_m(\lambda)\}_{\lambda\in\R}$.

\begin{lemma}\label{lem:fiber}
The resolutions of the identity  satisfy the natural decomposition
	\begin{equation*}
	E(\lambda)=\int_{M}^\oplus E_m(\lambda)\,\di\mu(m).
	\end{equation*}
\end{lemma}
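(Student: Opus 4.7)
My plan is to deduce the identity from the general principle that the Borel functional calculus commutes with the direct integral decomposition. Concretely, I will establish that, for every bounded Borel function $g\colon\R\to\C$,
\begin{equation*}
g(A)=\int_M^\oplus g(A(m))\,\di\mu(m),
\end{equation*}
and then apply this identity to $g=\ONE_{(-\infty,\lambda]}$, which yields $E(\lambda)$ on the left and $\int_M^\oplus E_m(\lambda)\,\di\mu(m)$ on the right, since the spectral projections are defined by these characteristic functions in the Borel functional calculus.

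The strategy for the displayed identity proceeds in three steps. First, I verify it for resolvents $g_z(x)=(x-z)^{-1}$ with $\mathrm{Im}\,z\neq 0$: the fiberwise resolvents $R_m(z):=(A(m)-z)^{-1}$ are uniformly bounded by $\abs{\mathrm{Im}\,z}^{-1}$, so $\tilde R(z):=\int_M^\oplus R_m(z)\,\di\mu(m)$ defines a bounded operator on $\Hcal$; writing $f\in D(A)$ in its fiber decomposition \eqref{eq:fibered-f} and using that $A$ acts fiberwise as $A(m)$, a direct check gives $\tilde R(z)(A-z)=(A-z)\tilde R(z)=\id$, so $\tilde R(z)=(A-z)^{-1}$. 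Second, the $*$-algebra generated by $\{g_z\}_{\mathrm{Im}\,z\neq 0}$ is uniformly dense in $C_0(\R)$ by Stone--Weierstrass, and the identity extends to this class by linearity, multiplication, and uniform operator-norm limits. Third, a bounded convergence argument extends it to bounded Borel functions, with the measurability of $m\mapsto g(A(m))$ preserved at each step by the standard construction of the Borel functional calculus as a pointwise limit of continuous ones.

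The main technical point -- and the principal obstacle -- lies in this last step: given a uniformly bounded sequence $g_n\to g$ of Borel functions converging pointwise, one must argue that $g_n(A(m))\to g(A(m))$ strongly in each fiber and that this strong convergence passes through the direct integral. This is achieved by dominated convergence applied in $L^2(M,\Hcal';\di\mu)$, using the uniform bound to control the integrand fiber by fiber. Once this step is secured, specializing to $g=\ONE_{(-\infty,\lambda]}$ yields the claimed decomposition of the spectral family.
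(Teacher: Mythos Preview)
Your approach is correct and is essentially the same as the paper's: both arguments apply the characteristic function $\ONE_{(-\infty,\lambda]}$ to the identity $A=\int_M^\oplus A(m)\,\di\mu(m)$ via functional calculus. The only difference is that the paper invokes as ``standard functional calculus'' the fact that $g(A)=\int_M^\oplus g(A(m))\,\di\mu(m)$ for bounded Borel $g$, whereas you supply a proof of this fact (resolvents, Stone--Weierstrass, then bounded pointwise limits); your version is thus a more detailed execution of the same idea rather than a different route.
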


\begin{proof}
By standard functional calculus, we apply the characteristic function $\ONE_{(-\infty,\lambda_0]}$ to
	\begin{equation*}
	\int_\R\lambda\,\di E(\lambda)=A=\int_M^\oplus A(m)\,\di\mu(m)=\int_M^\oplus \int_\R\lambda\,\di E_m(\lambda)\,\di\mu(m)
	\end{equation*}
to obtain the assertion of the lemma (at the point $\lambda_0$).
\end{proof}

The spectrum $\sigma(A)$ is characterized as follows:
	\begin{equation}\label{eq:main-relation}
	\lambda\in\sigma(A)
	\qquad\Leftrightarrow\qquad
	\forall\eps>0,\quad\mu\left(\left\{m\;:\;\sigma(A(m))\cap(\lambda-\eps,\lambda+\eps)\neq\emptyset\right\}\right)>0.
	\end{equation}
An immediate consequence of this is:
	\begin{equation}\label{eq:spec-containment}
	\sigma(A)\subset\overline{\bigcup_m\sigma(A(m))}.
	\end{equation}
The following characterization of eigenvalues follows:
	\begin{equation}\label{eq:eigval}
	\mu\left(\left\{m\;:\;\lambda\text{ is an eigenvalue of }A(m)\right\}\right)>0
	\qquad\Rightarrow\qquad
	\lambda\text{ is an eigenvalue of }A.
	\end{equation}

\begin{proposition}\label{prop:cont}
Assume that  the measure $\di\mu$  is the Borel measure associated to some given topology and that $M$ is compact. Assume that $\Sigma:M\to\mathrm{clos}(\R)=\{$closed subsets in $\R\}$ given by $\Sigma(m)=\sigma(A(m))$   is continuous (we take the Hausdorff distance on $\mathrm{clos}(\R)$). Then $\sigma(A)=\cup_m\sigma(A(m))$.
\end{proposition}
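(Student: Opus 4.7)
My plan is to refine \eqref{eq:spec-containment} into an equality by establishing two separate statements under the compactness/continuity hypotheses: (i) the set $\bigcup_m \sigma(A(m))$ is already closed in $\R$, so that the closure in \eqref{eq:spec-containment} is superfluous; and (ii) the reverse inclusion $\bigcup_m \sigma(A(m)) \subset \sigma(A)$ holds. Combined with \eqref{eq:spec-containment}, these yield the asserted equality.

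For (i) I would argue by sequential closure. Let $\lambda_n \to \lambda$ with $\lambda_n \in \sigma(A(m_n))$. Compactness of $M$ lets me extract a subsequence $m_{n_k} \to m_\infty \in M$. Continuity of $\Sigma$ then yields $\sigma(A(m_{n_k})) \to \sigma(A(m_\infty))$ in Hausdorff distance, and the standard upper-semicontinuity property of Hausdorff limits -- that if $F_k \to F$ and $x_k \in F_k$ with $x_k \to x$ then $x \in F$ -- forces $\lambda \in \sigma(A(m_\infty)) \subset \bigcup_m \sigma(A(m))$.

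For (ii) I would invoke \eqref{eq:main-relation}. Fix $\lambda \in \sigma(A(m_0))$ and $\eps > 0$. Continuity of $\Sigma$ at $m_0$ produces an open neighborhood $U_\eps \ni m_0$ on which $\sigma(A(m))$ lies within Hausdorff distance $\eps$ of $\sigma(A(m_0))$; since $\lambda \in \sigma(A(m_0))$ this forces $\sigma(A(m)) \cap (\lambda - \eps, \lambda + \eps) \neq \emptyset$ for every $m \in U_\eps$, whence
\begin{equation*}
U_\eps \subset \{ m \,:\, \sigma(A(m)) \cap (\lambda - \eps, \lambda + \eps) \neq \emptyset \}.
\end{equation*}
Under the (implicit) assumption that $\mu$ is positive on nonempty open subsets of $M$ -- as is the case for the examples of interest, e.g.\ Lebesgue measure on $[0,1]$ -- one obtains $\mu(U_\eps) > 0$, and \eqref{eq:main-relation} yields $\lambda \in \sigma(A)$.

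The one technical subtlety I anticipate is making sense of the Hausdorff distance on closed, possibly unbounded subsets of $\R$: the standard remedy is to replace the Euclidean metric on $\R$ by an equivalent bounded one (for instance $d(x,y) = |\arctan x - \arctan y|$), under which $\clos(\R)$ is a complete metric space and the upper-/lower-semicontinuity properties of Hausdorff convergence invoked above apply verbatim. Apart from this, the argument is direct and is really just a careful unpacking of the characterization \eqref{eq:main-relation} together with the two defining properties of Hausdorff convergence.
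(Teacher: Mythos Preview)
Your proposal is correct and follows essentially the same route as the paper: first observe that $\bigcup_m\sigma(A(m))$ is closed (by compactness of $M$ and continuity of $\Sigma$), then use continuity of $\Sigma$ at a given $m_0$ to produce an open neighborhood $U$ on which $\sigma(A(m))\cap(\lambda-\eps,\lambda+\eps)\neq\emptyset$, and conclude via \eqref{eq:main-relation}. The paper states step (i) in a single sentence and leaves the positivity $\mu(U)>0$ as a parenthetical remark; your version spells out the sequential argument for (i) and makes explicit the implicit hypothesis that $\mu$ charges nonempty open sets, as well as the caveat about metrizing $\clos(\R)$ --- all fair points, but not a different strategy.
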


\begin{proof}
Since $M$ is compact and $\Sigma$ is continuous, we have that $\cup_m\sigma(A(m))=\overline{\cup_m\sigma(A(m))}$. Hence, considering \eqref{eq:spec-containment} we only need to prove that $\sigma(A)\supset\cup_m\sigma(A(m))$. Suppose that $\lambda\in\cup_m\sigma(A(m))=\cup_m\Sigma(m)$. In particular there exists some $m_0\in M$ such that $\lambda\in\Sigma(m_0)$. By the continuity of $\Sigma$, for each $\eps>0$ there exists an open neighborhood $U$ of $m_0$ such that $\Sigma(m)\cap(\lambda-\eps,\lambda+\eps)\neq\emptyset$ for all $m\in U$, which, by \eqref{eq:main-relation}, implies that $\lambda\in\sigma(A)$ (note that $U$ has positive measure).
\end{proof}

\subsection{Illustrative example}\label{sec:toy}
The proof of Theorem \ref{thm:main} will rely on the ideas contained in the  following discussion, and in particular  in the proof of  Proposition \ref{thm:toy}. Specifically, here we study how the DoS of a fibered operator $A$ depends upon the behavior of eigenvalues of the fibers $A(m)$ when there are finitely-many contributing eigenvalues. In the proof of Theorem \ref{thm:main} there will be infinitely-many such eigenvalues.
Consider the following fibered problem. Let $\left(M,\di \mu\right) = \left([0,1],\di m\right)$ (Lebesgue measure) with the usual Borel $\sigma$-algebra,  let $\Hcal=\int_{[0,1]}^\oplus\Hcal'$ and  $A=\int_{[0,1]}^\oplus A(m)\di m$, and assume that (see Figure \ref{fig:toy} for an illustration):

\begin{assumption} \label{ass:ecal-1}
For all $m\in [0,1]$ the spectrum of $A(m)$ in the energy band $I:=(a,b)\subset\R$  is given by a single eigenvalue $\Ecal(m)$  (with multiplicity $1$) depending on $m$. Moreover, the function $\Ecal:[0,1]\to I$ is  measurable.
\end{assumption}

It is well-known that $\sigma(A)\cap I=\essran(\Ecal)$ and if $\Ecal$ is continuous then $\essran$ may be replaced by $\ran$ (see Proposition \ref{prop:cont}). Moreover, if $\Ecal$ is constant  on some open set $U$ and equal to $\overline{\Ecal}$ there, then $\overline{\Ecal}$ is an eigenvalue of $A$ (see \eqref{eq:eigval}). These two statements combined indicate that one can easily construct examples with eigenvalues embedded in the essential spectrum (or at its boundary). Hence  even in this simple example, the absolute continuity of the spectrum of $A$ depends on the nature of the function $\Ecal$. Let us assume that there are no embedded eigenvalues:

\begin{assumption}\label{ass:ecal}
$\Ecal(m)$ is $C^1$ and is not  constant on sets of positive measure.
\end{assumption}

	\begin{figure}[H]
	\includegraphics[scale=0.6]{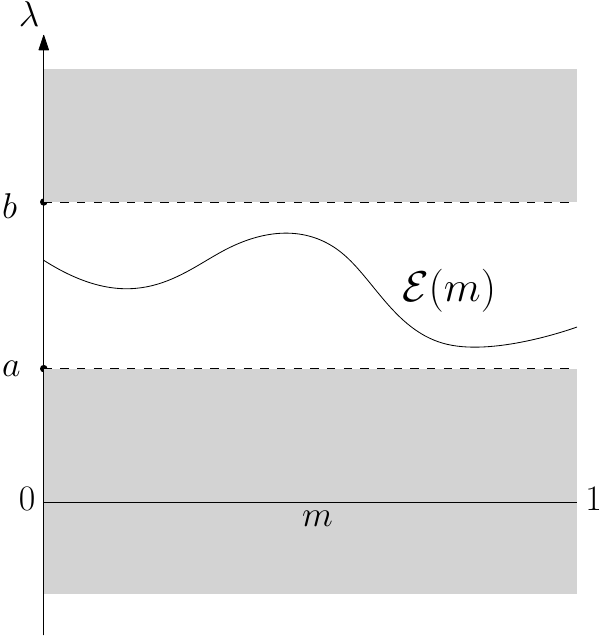}
	\caption{In this illustrative example, for each fiber $m$, there is one eigenvalue in the energy band $I=(a,b)$: $\sigma(A(m))\cap(a,b)=\Ecal(m)$. Outside of this energy band  there are no conditions on the spectrum.}\label{fig:toy}
	\end{figure}

\begin{definition}
Let $\lambda\in\ran(\Ecal)$ and denote $M_{\lambda}:=\Ecal^{-1}(\lambda)\subset [0,1]$. We say that $\lambda$ is a regular value if for any $m\in M_\lambda$, $\Ecal'(m)\neq 0$. We also define the set
	\begin{equation*}
	\sigma_{\mathrm{reg},I}(A):=\left\{\lambda\in I\;:\;\lambda\in\ran(\Ecal)\text{ is a regular value}\right\}.
	\end{equation*}
\end{definition}

\begin{lemma}
$\sigma_{\mathrm{reg},I}(A)$ is an open subset of $I$.
\end{lemma}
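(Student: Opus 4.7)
The plan is to show that every point $\lambda_0\in\sigma_{\mathrm{reg},I}(A)$ has an open neighborhood in $I$ contained in $\sigma_{\mathrm{reg},I}(A)$, using the inverse function theorem together with the compactness of $[0,1]$.

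The first step is to argue that $M_{\lambda_0}$ is \emph{finite}. Indeed $M_{\lambda_0}=\Ecal^{-1}(\lambda_0)$ is a closed subset of the compact interval $[0,1]$, hence compact. At every $m\in M_{\lambda_0}$ the assumption $\Ecal'(m)\neq 0$, combined with continuity of $\Ecal'$ from Assumption \ref{ass:ecal}, yields a neighborhood on which $\Ecal$ is strictly monotone and therefore injective, so each such $m$ is isolated in $M_{\lambda_0}$; a compact set of isolated points must be finite, so we may write $M_{\lambda_0}=\{m_1,\dots,m_N\}$. Around each $m_i$ I would pick pairwise disjoint relatively open neighborhoods $U_i\subset[0,1]$ on which $|\Ecal'|\geq \tfrac12|\Ecal'(m_i)|>0$; then $\Ecal|_{U_i}$ is a $C^1$-diffeomorphism onto an interval $V_i:=\Ecal(U_i)\ni\lambda_0$.

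The second step upgrades this local picture to a uniform two-sided neighborhood of $\lambda_0$. Set $K:=[0,1]\setminus\bigcup_i U_i$; then $\Ecal(K)$ is compact and by construction does not contain $\lambda_0$, so $d:=\dist(\lambda_0,\Ecal(K))>0$. Choose $\eps>0$ with $(\lambda_0-\eps,\lambda_0+\eps)\subset I\cap\bigcap_i V_i$ and $\eps<d$. For any $\lambda$ in this interval, $\eps<d$ forces every preimage of $\lambda$ to lie in some $U_i$, while $\lambda\in V_i$ yields exactly one preimage $\tilde m_i\in U_i$ with $\Ecal'(\tilde m_i)\neq 0$. Hence $M_\lambda=\{\tilde m_1,\dots,\tilde m_N\}$ consists entirely of regular points, so $\lambda\in\sigma_{\mathrm{reg},I}(A)$, proving openness.

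The main obstacle is the boundary case $m_i\in\{0,1\}$, in which $V_i=\Ecal(U_i)$ is only a one-sided interval with $\lambda_0$ as its endpoint and the inclusion $(\lambda_0-\eps,\lambda_0+\eps)\subset V_i$ needed in the previous paragraph fails. This does not arise in the situation the paper has in mind, where $\Ecal$ behaves like $m^\alpha$ near $m=0$ and so $\Ecal'$ either vanishes or blows up at the endpoint, excluding it from being a regular preimage; with all preimages interior, the argument goes through verbatim.
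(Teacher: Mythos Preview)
Your argument is a careful, explicit version of the paper's two-sentence proof, which merely says that ``the condition of being a regular point is an open condition'' and then asserts that no $\lambda\in I\cap\partial(\ran(\Ecal))$ can be regular. The inverse-function-theorem and compactness reasoning you spell out is exactly what underlies the paper's first sentence; the approaches coincide.

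You are right to single out the boundary case $m_i\in\{0,1\}$: this is precisely where the paper's assertion about $\partial(\ran(\Ecal))$ is meant to do the work, and that assertion is not correct in the stated generality. Take $\Ecal(m)=m$ on $[0,1]$ with $I=(-1,2)$: then $\lambda_0=0$ has $M_{\lambda_0}=\{0\}$ with $\Ecal'(0)=1\neq 0$, so by the paper's own definition $\lambda_0$ \emph{is} a regular value; yet every $\lambda<0$ lies outside $\ran(\Ecal)$ and hence outside $\sigma_{\mathrm{reg},I}(A)$, giving $\sigma_{\mathrm{reg},I}(A)=[0,1]$, which is not open in $I$. Thus the lemma, read literally under Assumption~A\ref{ass:ecal} alone, can fail. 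Your final paragraph is therefore not a dodge but a correct diagnosis: one needs an extra hypothesis (either that no preimage of a regular value sits at an endpoint of $[0,1]$, or that values outside $\ran(\Ecal)$ are declared vacuously regular), and in the paper's intended application this is harmless. In short, your proof is at least as complete as the paper's, and you have located a gap common to both.
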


\begin{proof}
The claim is almost trivial, as the condition of being a regular value is an open condition. We only note that any point in the set $I\cap\partial(\ran(\Ecal))$ cannot be a regular value since $I$ is open.
\end{proof}

%
%
%

\begin{proposition}\label{thm:toy}
Let $\Hcal$ and $A$ be as in \eqref{eq:fibered-h-a} with $\left(M,\di \mu\right) = \left([0,1],\di m\right)$. Suppose that $\Ecal$ satisfies Assumptions \ref{ass:ecal-1} and \ref{ass:ecal}. Let $\lambda\in\sigma_{\mathrm{reg},I}(A)$. Then the DoS of $A$ at $\lambda$ satisfies the estimate
	\begin{equation}\label{eq:est-sing-eig}
	\left|\frac{\di}{\di\lambda}(E(\lambda)f,g)_{\Hcal}\right|
	\leq
	\left( \sum_{m\in M_{\lambda}}\frac{1}{|\Ecal'(m)|} \right) \, \|f\|_{\Hcal}\|g\|_{\Hcal},\qquad\forall f,g\in\Hcal.
	\end{equation}
\end{proposition}

\begin{proof}
Since $M=[0,1]$ is compact, $\Ecal\in C^1$, and $\lambda$ is a regular value, $M_{\lambda}$ is a finite set and we denote its elements $M_{\lambda}=\{m_1,\dots,m_k\}$. If   $\Ecal(m)\leq\lambda$ then the projection operator $E_m(\lambda)$ may be represented as $P_m(\Ecal(m))+ E_m(a)$ where $P_m(\Ecal(m))$ is the projection onto the eigenspace in $\Hcal'$ (the $m$th ``copy'') corresponding to the eigenvalue $\Ecal(m)$. If  $\Ecal(m)>\lambda$ then the projection operator $E_m(\lambda)$ is equal to the projection operator  $E_m(a)$. Letting $f,g\in\Hcal$, with fibers $f_m,g_m\in \Hcal'$ (as in \eqref{eq:fibered-f}), we therefore have
	\begin{align*}
	(E(\lambda)f,g)_{\Hcal}
	&=
	\int_{[0,1]}\left(E_m(\lambda)f_m,g_m\right)_{\Hcal'}\,\di m\\
	&=
	\int_{\{m\;:\;\Ecal(m)\leq\lambda\}}\left(P_m(\Ecal(m))f_m,g_m\right)_{\Hcal'}\,\di m+\int_{[0,1]}\left(E_m(a)f_m,g_m\right)_{\Hcal'}\,\di m.
	\end{align*}
Upon differentiation in $\lambda$ the second term on the right hand side is eliminated, and one is left with
	\begin{align*}
	\frac{\di}{\di\lambda}(E(\lambda)f,g)_{\Hcal}
	&=
	\lim_{h\downarrow0}\left[\frac{1}{h}\int_{\{m\;:\;\lambda<\Ecal(m)\leq\lambda+h\}}\left(P_m(\Ecal(m))f_m,g_m\right)_{\Hcal'}\,\di m\right]\\
	&=
	\sum_{i,\; \Ecal'(m_i)>0}\lim_{h\downarrow0}\left[\frac{1}{h}\int_{(m_i, m_i+\frac{h}{\Ecal'(m_i)}]}\left(P_m(\Ecal(m))f_m,g_m\right)_{\Hcal'}\,\di m\right]\notag\\
	&\quad+
	\sum_{i,\; \Ecal'(m_i)<0}\lim_{h\downarrow0}\left[\frac{1}{h}\int_{(m_i+\frac{h}{\Ecal'(m_i)}, m_i]}\left(P_m(\Ecal(m))f_m,g_m\right)_{\Hcal'}\,\di m\right].
	\end{align*}
Making the change of variables $\eta_i=\frac{h}{|\Ecal'(m_i)|}$ we have
	\begin{align*}
	\frac{\di}{\di\lambda}(E(\lambda)f,g)_{\Hcal}
	&=
	\sum_{i,\; \Ecal'(m_i)>0}\frac{1}{\Ecal'(m_i)}\lim_{\eta_i\downarrow0}\left[\frac{1}{\eta_i}\int_{(m_i, m_i+\eta_i]}\left(P_m(\Ecal(m))f_m,g_m\right)_{\Hcal'}\,\di m\right]\\
	&\quad+
	\sum_{i,\; \Ecal'(m_i)<0}\frac{1}{|\Ecal'(m_i)|}\lim_{\eta_i\downarrow0}\left[\frac{1}{\eta_i}\int_{(m_i-\eta_i, m_i]}\left(P_m(\Ecal(m))f_m,g_m\right)_{\Hcal'}\,\di m\right]\\
	&=
	\sum_{i=1}^k\frac{1}{|\Ecal'(m_i)|}\left(P_{m_i}(\Ecal(m_i))f_{m_i},g_{m_i}\right)_{\Hcal'}.
	\end{align*}
Note that one must be careful if one of the $m_i$ is  $0$ (resp. $1$) and $\Ecal'(0)<0$ (resp. $\Ecal'(1)>0$) as then the above argument requires a slight adjustment. However the same conclusion holds. A simple use of the Cauchy-Schwartz inequality and the orthogonality properties of the projection operators leads to the desired estimate \eqref{eq:est-sing-eig}.
\end{proof}

\section{Proof of the Main Theorem}\label{sec:annulus}
The proof of Theorem \ref{thm:main} follows by first obtaining an estimate on the Density of States (DoS) of $A$ (Proposition \ref{thm:main2}), and then invoking a result from \cite{Ben-Artzi2019c} on the relationship between the DoS near $0$ and the rate of convergence in the ergodic theorem (Proposition \ref{thm:abstract}). We begin with a few results concerning our functional spaces.

\begin{proposition}[{\cite[Corollary 26]{Simon1990}}]\label{prop:simon}
	\label{prop:embedding}
	Let $s>1/2$ and $\gamma\geq0$. Any $f\in\dot{\mathcal{H}}^{s,\gamma}$ is $(s-1/2)$-H\"older continuous with respect to the $m$ variable.
	That is,  there is a constant $C(s)$  such that for any $f\in\dot{\mathcal{H}}^{s,\gamma}$ and any $m, m' \in [0,1]$
	\begin{equation}
		\label{ineq.Holder}
		\left\|f_m - f_{m'} \right\|_{ \dot{H}^{\gamma} } \leq C(s) \left\| f \right\|_{s,\gamma} |m-m'|^{s-1/2}
	\end{equation}
where $C(s)$  depends neither on $\gamma$ nor on $f$.
\end{proposition}

Consequently, it follows that for all $f\in \dot{\mathcal{H}}^{s,\gamma}_0$
	\begin{equation}
		\label{ineq.Holder.0}
		\|f_m\|_{\dot{H}^{\gamma}} \leq C(s) \left\| f \right\|_{s,\gamma} m^{s-1/2}, \qquad \forall  m\in [0,1],
	\end{equation}
since $f_0 = 0$.
Next, we  state  a result on the H\"older regularity of the Fourier coefficients of functions in $\dot{\mathcal{H}}^{s,\gamma}$, which is just a corollary of the H\"older continuity of $f$ stated in Proposition \ref{prop:embedding}:
\begin{lemma}
	\label{prop.Holder.Fourier}
	For any $f\in\dot{\mathcal{H}}^{s,\gamma}_0$, there holds
	\begin{equation}
		\label{ineq.Holder.Fourier}
		\left| \widehat{f}_m(k) - \widehat{f}_{m'}(k) \right|
		\leq C
		\|f\|_{s,\gamma}|k|^{-\gamma} |m-m'|^{s-1/2}  , \qquad \forall  m, m'\in[0,1] , \, \forall k \in \mathbb{Z},
	\end{equation}

	\noindent where the constant $C>0$ does not depend on $f$, $k$ or $m$.
\end{lemma}

\begin{proof}
	We readily compute
	\begin{align*}
	\left| \widehat{f}_m(k) - \widehat{f}_{m'}(k) \right|^2
	& = |k|^{-2\gamma} |k|^{2\gamma} \left| \widehat{f}_m(k) - \widehat{f}_{m'}(k) \right|^2 \\
		& \leq |k|^{-2\gamma} \sum_{k' \in \mathbb{Z}} |k'|^{2\gamma} \left| \widehat{f}_m(k') - \widehat{f}_{m'}(k') \right|^2
		 \leq |k|^{-2\gamma} \|f_m - f_{m'}\|_{\dot{H}^{\gamma}}^2.
	\end{align*}
By the H\"older continuity \eqref{ineq.Holder} of $f_m$ in the $\dot{H}^{\gamma}$ norm, we obtain \eqref{ineq.Holder.Fourier}.
\end{proof}

We can now return to our operator $A=-i\phi\frac{\p}{\p\theta}$ which we recall has the following fiber decomposition:
	\begin{equation*}
	-i\phi\frac{\p}{\p\theta}
	=
	A
	=
	\int_{[0,1]}^\oplus A(m)\,\di m
	=
	-i\int_{[0,1]}^\oplus\phi(m) \frac{\di }{\di \theta}\,\di m.
	\end{equation*}

The spectrum of each fiber is clearly discrete:
\begin{lemma}\label{lem:fibers}
The spectrum of each fiber $A(m)$ is given by
	\begin{equation}\label{eq:spec-fiber}
	\sigma(A(m))=\phi(m)\Z.
	\end{equation}
\end{lemma}

\begin{proof}
This is trivially true, due to the unitary equivalence between the operator $-i\frac{\p}{\p\theta}$ and the multiplication operator  $k$ (that is, the operator that multiplies by $k$) where $k\in\Z$.
\end{proof}

But the spectrum of $A$ contains (at least some) continuous parts:
\begin{corollary}\label{cor:spec-gap}
$A$ has no spectral gap. That is, there is $r>0$ such that $(-r,r)\subset\sigma(A)$.
\end{corollary}

\begin{proof}
This follows immediately from Proposition \ref{prop:cont} and Lemma \ref{lem:fibers}, due to Assumption \ref{ass:main}. Note also that $0$ is always an eigenvalue (any function that is constant along trajectories lies in the kernel of $A$).
\end{proof}

It is important to  note that  to each energy $\lambda\in\sigma(A)$ correspond infinitely many fibers.
Indeed, contributions to the spectrum at energy level $\lambda$ 
will come from all $m \in(0,1]$ for which there exists $k\in\Z$ such that $\lambda=k\varphi(m)$. In particular, considering Assumption \ref{ass:main} and \eqref{eq:spec-fiber}, each fiber $m\in(0,m_0]$  will contribute to the  energy levels
	\begin{equation}\label{eq:lambdamk}
	\lambda_{m,k}= ckm^\alpha,\qquad k\in\Z.
	\end{equation}
Conversely, each energy $\lambda\in\R$ will be in the spectrum of any fiber $m\in(0,m_0]$ for which there exists $k\in\Z$ such that
	\begin{equation*}
	m=\left(\frac{\lambda}{ck}\right)^{1/\alpha}.
	\end{equation*}

These observations highlight the fact that the following result, which provides a bound on the density of the spectrum is by no means trivial: this is a continuous spectrum generated from an accumulation  of  eigenvalues belonging to uncountably many fibers.

\begin{proposition}\label{thm:main2}

Let $A=-i\phi(m)\frac{\p}{\p\theta}$ satisfy  Assumption \ref{ass:main}. Assume that $s > 1/2$ and $\gamma \geq 0$  satisfy the constraint \eqref{constraint}.
Then there exists $r>0$ such that the DoS of  $A$ satisfies
	\begin{equation*}
	\left| \frac{\di }{\di \lambda}(E(\lambda)f,g)_{L^2(\Acal)} \right|
	\leq
	C|\lambda|^{\frac{2s}{\alpha} -1} \left\| f \right\|_{s,\gamma} \left\| g \right\|_{s,\gamma} ,\qquad\forall\lambda\in(-r,r)\setminus\{0\},\,\forall f,g\in\dot{\mathcal{H}}^{s,\gamma}_0,
	\end{equation*}
where $C>0$  does not depend on $f$, $g$ or $\lambda$.
\end{proposition}

\begin{remark}
	\label{remark.threshold}
	In the case $\alpha \leq 1$ the constraint \eqref{constraint} is satisfied for all $s > 1/2$ with $\gamma = 0$.
	This means that for $\alpha \leq 1$, the subspace $\dot{\mathcal{H}}^{s,0}_0$ is sufficient to get the estimate of the DoS.
  In the case $\alpha > 1$ however, the constraint \eqref{constraint} is stronger and working in the subspace $\dot{\mathcal{H}}^{s,0}_0$ is not sufficient to obtain estimates of the DoS. This is mainly due to the fact that the eigenvalues $\lambda_{m,k} =   ck m^{\alpha}$ concentrate at $0$ faster as $m\to0$ and $k\to+ \infty$ (see Figure \ref{fig:alpha}).
	To balance that effect, more regularity in the $\theta$ variable is required.

	\begin{figure}[H]
	\centering
	\begin{tikzpicture}[domain=0:3]
		\draw[color=gray,xshift=5cm] (0,0) -- (0,3) -- (3,3) -- (3,0) -- (0,0)
			node at (1.5,-0.5) {$m$}
			node at (-0.1,-0.2) {$0$};
		\draw[xshift=5cm] plot  ( {(\x)^3/9},\x) node at (1.7,3.2) {$\alpha <1$ \qquad};
		\draw[color=gray,xshift=10cm] (0,0) -- (0,3) -- (3,3)-- (3,0) -- (0,0)
			node at (1.5,-0.5) {$m$}
			node at (-0.1,-0.2) {$0$};
		\draw[xshift=10cm] plot (\x , {(\x)^3/9}) node at (1.7,3.2) {$\alpha >1$};
	\end{tikzpicture}
	\caption{The behavior of $m^\alpha$ near $m=0$  for $\alpha<1$ and for $\alpha>1$ is very different, leading to different functional spaces in both cases.}\label{fig:alpha}
	\end{figure}
\end{remark}

\smallskip

\begin{proof}[Proof of  Proposition \ref{thm:main2}]
Without loss of generality we take $\lambda>0$. All arguments can be repeated with minimal (obvious) changes for $\lambda<0$, so we omit this here. Using Lemma \ref{lem:fiber} we write
\begin{align*}
	(E(\lambda)f,g)_{L^2(\Acal)}
	&=
	\int_{[0,1]}\left(E_m(\lambda)f_m,g_m\right)_{L^2(\Sbb)}\,\di m\\
	&=
	\int_{[0,1]}\sum_{\substack{k\in\N\\ \lambda_{m,k}\leq\lambda}}\left(P_m(\lambda_{m,k})f_m,g_m\right)_{L^2(\Sbb)}\,\di m
\end{align*}
where $P_m(\lambda_{m,k})$ is the projection on the Fourier coefficient $\widehat{f_m}(k)$ of $f_m$ (see \eqref{eq:lambdamk} for the definition of $\lambda_{m,k}$).
Defining the energy band
	\begin{equation*}
	\Bcal(\lambda, m ,h) = \left\{ k \in \N \, : \, \lambda < ck m^{\alpha} \leq \lambda + h \right\}
	\end{equation*}
we can express the finite difference
	\begin{align*}
	\Delta_hE(\lambda)
	&:=
	\frac1h\left\{(E(\lambda+h)f,g)_{L^2(\Acal)}-(E(\lambda)f,g)_{L^2(\Acal)}\right\}\\
	&=
	\frac{1}{h}\int_{[0,1]} \sum_{ k \in \Bcal(\lambda , m ,h) } \left(P_m(\lambda_{m,k})f_m,g_m\right)_{L^2(\Sbb)}\,\di m \\
	&=
	\frac{1}{h}\int_{[0,1]} \sum_{ k \in \Bcal(\lambda, m ,h) } \,\widehat{f_m}(k) \overline{\widehat{g_m}(k)} \,\di m  .
	\end{align*}
We would now like to commute the integration and summation, to obtain the sum of integrals over small subintervals of $[0,1]$,
in order to then take the limit $h\to0$.
However, for fixed $h$  the sets $\Bcal(\lambda , m ,h)$ will contain arbitrarily many elements as $m\to0$.
Moreover, as $m\to0$ the preimages of $\lambda < ck m^{\alpha} \leq \lambda + h$ in $m\in[0,1]$ are no longer disjoint, resulting in many redundant integrations.
Indeed, intervals $\left(\frac{\lambda}{cm^{\alpha}} , \frac{\lambda+h}{cm^{\alpha}} \right]$ are of size larger than one if $m < (h/c)^{1/\alpha}$,
and then may contain more than one integer.
Our strategy is to split the previous integral into two parts
	\begin{equation*}
	\frac{1}{ h}\int_{[0,1]} \sum_{ k \in \Bcal(\lambda, m ,h) } \,\widehat{f_m}(k) \overline{\widehat{g_m}(k)} \,\di m
	=
	\mathcal{I}(h) + \mathcal{R}(h)
	\end{equation*}
where
	\begin{equation*}
	\mathcal{R}(h) = \frac{1}{h}\int_{[0 , (h/c)^{1/\alpha} )} \sum_{ k \in \Bcal(\lambda, m ,h) } \,\widehat{f_m}(k) \overline{\widehat{g_m}(k)} \,\di m
	\end{equation*}
is the part where the energy bands $\Bcal$ might contain more than one integer, and
	\begin{equation*}
	\mathcal{I}(h) = \frac{1}{ h}\int_{[(h/c)^{1/\alpha} , 1]} \sum_{ k \in \Bcal(\lambda, m ,h) } \,\widehat{f_m}(k) \overline{\widehat{g_m}(k)} \,\di m
	\end{equation*}
is the part where the energy bands can contain at most one integer. We now estimate these two integrals separately.

\emph{1. The term $\Rcal(h)$}. We  claim that  $ \lim_{h\to0} \mathcal{R}(h) = 0$.
First, observe that we can bound
	\begin{equation*}
	\left| \sum_{ k \in \Bcal(\lambda, m ,h) } \,\widehat{f_m}(k) \overline{\widehat{g_m}(k)} \right|
	\leq
	 \sum_{ k \in \Bcal(\lambda, m ,h) } \left|\widehat{f_m}(k) \overline{\widehat{g_m}(k)} \right|
	\leq
	\sum_{ k > \lambda / (cm^{\alpha}) } \, \left| \widehat{f_m}(k) \overline{\widehat{g_m}(k)} \right|.
	\end{equation*}
Taking $f,g \in \dot{\mathcal{H}}^{s,\gamma}_0$, we using  H\"older's inequality, and the inequality \eqref{ineq.Holder.0}, we obtain
	\begin{align*}
	\left| \mathcal{R}(h) \right|
	& \leq
	\frac{1}{  h}\int_{[0 , (h/c)^{1/\alpha} )} \sum_{ k > \lambda/(cm^{\alpha}) } \, |k|^{-2\gamma}
	\left( |k|^{\gamma}\left| \widehat{f_m}(k) \right| \right) \, \left( |k|^{\gamma} \left| \overline{\widehat{g_m}(k)} \right| \right) \,\di m \\
	& \leq
	\frac{1}{  h}\int_{[0 , (h/c)^{1/\alpha} )} \sup_{ k > \lambda/(cm^{\alpha}) } \left( |k|^{-2\gamma}  \right) \, \left\| f_m \right\|_{\dot{H}^{\gamma}} \left\| g_m \right\|_{\dot{H}^{\gamma}} \,\di m\\
	& \leq
	\frac{1}{  h}\int_{[0 , (h/c)^{1/\alpha} )} \left( \frac{\lambda}{cm^{\alpha}} \right)^{-2 \gamma} \left\| f_m \right\|_{\dot{H}^{\gamma}} \left\| g_m \right\|_{\dot{H}^{\gamma}} \,\di m \\
	& \leq
	C \, \|f\|_{s,\gamma} \, \|g\|_{s,\gamma} \, \frac{ \lambda^{-2 \gamma} }{   h } \int_{[0 , (h/c)^{1/\alpha} )} m^{2\alpha \gamma + 2(s-1/2)} \,\di m \\
	& =
	C \|f\|_{s,\gamma} \, \|g\|_{s,\gamma} \, { \lambda^{-2 \gamma} } h^{2 \gamma + 2s/\alpha - 1} .
	\end{align*}
As soon as the constraint \eqref{constraint} on $s$ and $\gamma$ is satisfied there holds $\lim_{ h \to 0} \mathcal{R}(h) = 0.$\\

\emph{2. The term $\Ical(h)$}. As the integration in this term is over  $m \in [(h/c)^{1/\alpha} , 1]$, the energy band $\Bcal(\lambda , m ,h)$ contains at most one integer.
For $\Bcal(\lambda , m ,h)$ to contain one integer, by definition $m$ has to be in an interval of the form $( \left( \frac{\lambda}{ck} \right)^{1/\alpha} , \left( \frac{\lambda+h}{ck} \right)^{1/\alpha} ]$ for some $k \in \N$.
As $m \in [(h/c)^{1/\alpha} , 1]$, this implies for the bounds of such an interval that
\[
	\left(\frac{h}{c}\right)^{1/\alpha}
	\leq
	\left( \frac{\lambda}{ck} \right)^{1/\alpha}
	\quad \text{and} \quad
	\left( \frac{\lambda+h}{ck} \right)^{1/\alpha}
	\leq
	1 .
\]
The integer $k\geq1$ has  to satisfy the bounds 
	\begin{equation*}
	k_0:=
	\max(1, \left\lfloor \lambda/c \right\rfloor)
	\leq
	k
	\leq
	 \lfloor \lambda/h \rfloor
	 =:
	 N(\lambda, h)
	\end{equation*}
Then we have
	\begin{align*}
	\mathcal{I}(h)
	& =
	\frac{1}{  h}\int_{[(h/c)^{1/\alpha} , 1]} \sum_{ k \in \Bcal(\lambda , m ,h) } \,\widehat{f_m}(k) \overline{\widehat{g_m}(k)} \,\di m \\
	& =
	\frac{1}{  h} \sum_{k_0\leq k\leq N(\lambda, h)}
	\int_{ ( \left( \frac{\lambda}{ck} \right)^{1/\alpha} , \left( \frac{\lambda+h}{ck}\right)^{1/\alpha} ] }
	\,\widehat{f_m}(k) \overline{\widehat{g_m}(k)} \,\di m .
	\end{align*}

Next, we use the same kind of computation as in the proof of Proposition \ref{thm:toy}, where the prefactor $\frac{1}{\Ecal'(m)}$ appears. In the current computation, the function $\Ecal(m)$ becomes $k\phi(m)=ckm^\alpha$ so that $\Ecal'(m)$ becomes $ck\alpha m^{\alpha-1}$. Substituting $\left(\frac{\lambda}{ck}\right)^{1/\alpha}$ for $m$  we find that the prefactor should be $\frac{1}{ck\alpha} \left( \frac{\lambda}{ck} \right)^{1/\alpha -1}$.
Furthermore, we use Lemma \ref{prop.Holder.Fourier} which ensures some uniform regularity for the Fourier coefficients and allows us to make pointwise (in $m$) evaluations.
This leads to
	\begin{equation*}
	\lim_{h\to0} \frac{1}{h} \int_{ ( \left( \frac{\lambda}{ck} \right)^{1/\alpha} , \left( \frac{\lambda+h}{ck} \right)^{1/\alpha} ] }
	\,\widehat{f_m}(k) \overline{\widehat{g_m}(k)} \,\di m
	 =
	\frac{1}{ck\alpha} \left( \frac{\lambda}{ck} \right)^{1/\alpha -1} \,\widehat{f}_{(\frac{\lambda}{ck})^{1/\alpha}}(k) \overline{\widehat{g}_{(\frac{\lambda}{ck})^{1/\alpha}}(k)}.
	\end{equation*}
Using the fact that $\lim_{h\to0}N(\lambda,h)=+\infty$, we get
	\begin{equation*}
	\lim_{h\to0} \mathcal{I}(h)
	=
	\sum_{k \geq k_0 }
	\frac{1}{c\alpha k} \left( \frac{\lambda}{ck} \right)^{1/\alpha -1} \,\widehat{f}_{(\frac{\lambda}{ck})^{1/\alpha}}(k) \, \overline{\widehat{g}_{(\frac{\lambda}{ck})^{1/\alpha}}(k)}
	\end{equation*}
by uniform boundedness.
Using the inequality \eqref{ineq.Holder.Fourier} and the fact that $f_0=0$, we have
%
	\begin{align*}
	\left| \lim_{h\to0} \mathcal{I}(h) \right|
	&
	= \frac{1}{c\alpha} \left|
	\sum_{k \geq k_0 }  \frac{1}{k} \left( \frac{\lambda}{ck} \right)^{1/\alpha -1}
	\widehat{f}_{(\frac{\lambda}{ck})^{1/\alpha}}(k) \, \overline{\widehat{g}_{(\frac{\lambda}{ck})^{1/\alpha}}(k)}
	\right| \\
	& \leq
	C \, \|f\|_{s,\gamma}  \|g\|_{s,\gamma}  { \lambda^{1/\alpha -1} } \sum_{k \geq k_0 } k^{{}-1/\alpha -2\gamma} \left( \frac{\lambda}{ck} \right)^{2(s-1/2)/\alpha} \\
	& \leq
	C \, \|f\|_{s,\gamma} \|g\|_{s,\gamma} { \lambda^{2s/\alpha -1} } \sum_{k \geq k_0 } k^{-(1/\alpha +2\gamma + 2(s-1/2)/\alpha)} \\
	& \leq
	C \, \|f\|_{s,\gamma} \|g\|_{s,\gamma} { \lambda^{2s/\alpha -1} } \sum_{k \geq 1 } k^{{}-(2\gamma + 2s/\alpha)}
	\end{align*}
 where we use the fact that $ k_0 \geq 1$ in the last inequality.
The series is finite if the constraint \eqref{constraint} on $s$ and $\gamma$ is satisfied.
This completes the proof.
\end{proof}

To prove Theorem \ref{thm:main} we recall the following result from \cite{Ben-Artzi2019c}:
\begin{proposition}[Corollary 1.8 in \cite{Ben-Artzi2019c}]\label{thm:abstract}
Let $A:D(A)\subset\Hcal\to\Hcal$ be self-adjoint and assume that there exist  a Banach subspace $\Xcal\subset\Hcal$ which is dense in $\Hcal$  in the topology of $\Hcal$ that is continuously embedded in $\Hcal$ (and therefore the norm $\|\cdot\|_\Xcal$ is stronger than the norm $\|\cdot\|_\Hcal$), and positive numbers $C,p,r>0$  such that:
	\begin{equation*}
	\left|\frac{\di}{\di\lambda}\left(E(\lambda)f,g\right)_{\Hcal}\right|
	\leq
	C |\lambda|^{p-1}\|f\|_\Xcal\|g\|_\Xcal,\qquad\forall f,g\in\Xcal,\,\forall\lambda\in (-r,r)\setminus\{0\}.
	\end{equation*}
	Then  $P^Tf:=\frac{1}{2T}\int_{-T}^Te^{itA}f\,\di t$ converges to the orthogonal projection of $\Hcal$ onto $\ker A$ (denoted $P$) uniformly:
	\begin{equation*}\label{eq:conv-rate-frac}
	\|P^T-P\|_{\Xcal\to\Hcal}\leq C(p)T^{-\ell/2},\qquad\forall T>1,
	\end{equation*}
where $\ell=\min\{p-\varepsilon,2\}$ for any $\varepsilon>0$.
\end{proposition}

This result readily allows us to prove our main theorem:

\begin{proof}[Proof of Theorem \ref{thm:main}]
From Proposition \ref{thm:main2} we know that  $\frac{\di}{\di\lambda}E(\lambda)$ has the bound $|\lambda|^{\frac{2s}{\alpha}-1}$ in the subspace $\dot{\mathcal{H}}^{s,\gamma}_0$ in a punctured neighborhood of $\lambda=0$. From Proposition \ref{thm:abstract} we know that this implies that
	\begin{equation*}\label{eq:conv-rate-frac}
	\|P^T\|_{\dot{\mathcal{H}}^{s,\gamma}_0\to L^2(\Acal)}=\|P^T-P\|_{\dot{\mathcal{H}}^{s,\gamma}_0\to L^2(\Acal)}\leq CT^{-\ell/2},\qquad\forall T>1,
	\end{equation*}
where $\ell=\min\{\frac{2s}{\alpha}-\varepsilon,2\}$ for any $\varepsilon>0$.
This completes the proof.
\end{proof}

\bibliography{library}

\begin{thebibliography}{10}

\bibitem{Agmon1975}
S.~Agmon.
\newblock {Spectral properties of Schr{\"{o}}dinger operators and scattering
  theory}.
\newblock {\em Ann. Scuola Norm. Sup. Pisa Cl. Sci.}, 2(2):151--218, 1975.

\bibitem{Ben-Artzi2013d}
J.~Ben-Artzi.
\newblock {On the spectrum of shear flows and uniform ergodic theorems}.
\newblock {\em Journal of Functional Analysis}, 267(1):299--322, jul 2014.

\bibitem{Ben-Artzi2014}
J.~Ben-Artzi.
\newblock {Instabilities in kinetic theory and their relationship to the
  ergodic theorem}.
\newblock {\em Contemporary Mathematics}, 653:15, dec 2015.

\bibitem{Ben-Artzi2019c}
J.~Ben-Artzi and B.~Morisse.
\newblock {Uniform convergence in von Neumann's ergodic theorem in the absence
  of a spectral gap}.
\newblock {\em Ergodic Theory and Dynamical Systems}, 41(6):1601--1611, jun
  2021.

\bibitem{Cox2013a}
G.~Cox.
\newblock {The L 2 Essential Spectrum of the 2D Euler Operator}.
\newblock {\em Journal of Mathematical Fluid Mechanics}, 16(3):419--429, sep
  2014.

\bibitem{Kachurovskii2023a}
A.~G. Kachurovskii, I.~V. Podvigin, and A.~Z. Khakimbaev.
\newblock {Uniform Convergence on Subspaces in the von Neumann Ergodic Theorem
  with Discrete Time}.
\newblock {\em Mathematical Notes}, 113(5-6):680--693, 2023.

\bibitem{Kachurovskii2023}
A.~G. Kachurovskii, I.~V. Podvigin, and V.~E. Todikov.
\newblock {Uniform Convergence on Subspaces in Von Neumann'S Ergodic Theorem
  With Continuous Time}.
\newblock {\em Siberian Electronic Mathematical Reports}, 20(1):183--206, 2023.

\bibitem{Kato1995}
T.~Kato.
\newblock {\em {Perturbation Theory for Linear Operators}}.
\newblock Springer-Verlag, 1995.

\bibitem{Lions1972}
J.~L. Lions and E.~Magenes.
\newblock {\em {Non-homogeneous boundary value problems and applications,
  Volume 1}}.
\newblock Springer-Verlag, 1972.

\bibitem{Moser2005}
J.~Moser and E.~Zehnder.
\newblock {\em {Notes on Dynamical Systems}}.
\newblock American Mathematical Society, 2005.

\bibitem{Reed1978}
M.~Reed and B.~Simon.
\newblock {\em {Methods of modern mathematical physics volume 4: Analysis of
  operators}}.
\newblock 1978.

\bibitem{Simon1990}
J.~Simon.
\newblock {Sobolev, Besov and Nikolskii fractional spaces: Imbeddings and
  comparisons for vector valued spaces on an interval}.
\newblock {\em Annali di Matematica Pura ed Applicata}, 157(1):117--148, dec
  1990.

\bibitem{VonNeumann1932a}
J.~von Neumann.
\newblock {Proof of the quasi-ergodic hypothesis}.
\newblock {\em Proceedings of the National Academy of Sciences}, 18(2):70--82,
  1932.

\bibitem{Yao2014}
Y.~Yao and A.~Zlato{\v{s}}.
\newblock {Mixing and un-mixing by incompressible flows}.
\newblock {\em Journal of the European Mathematical Society}, 19(7):1911--1948,
  2017.

\end{thebibliography}
\bibliographystyle{abbrv}
\end{document}